\newfont{\footsc}{cmcsc10 at 8truept}
\newfont{\footbf}{cmbx10 at 8truept}
\newfont{\footrm}{cmr10 at 10truept}
\theoremstyle{plain}
\newtheorem{theorem}{Theorem}[section]
\newtheorem{lemma}[theorem]{Lemma}
\newtheorem{corollary}[theorem]{Corollary}
\theoremstyle{definition}
\newtheorem{definition}[theorem]{Definition}
\newtheorem{remark}[theorem]{Remark}
\DeclareMathOperator{\supp}{supp}
\newcommand{\N}{{\mathbb{N}}}
\def\bsi{\boldsymbol{i}}
\def\bsj{\boldsymbol{j}}
\def\bs0{\bf 0}
\newcommand{\hp}[1]{\textcolor{red}{#1}}
\providecommand{\customgenericname}{}
\newcommand{\newcustomtheorem}[2]{%
	\newenvironment{#1}[1]
	{%
		\renewcommand\customgenericname{#2}%
		\renewcommand\theinnercustomgeneric{##1}%
		\innercustomgeneric
	}
	{\endinnercustomgeneric}
}
\title{Improved bounds for the bracketing number of orthants or revisiting an algorithm of Thi\'{e}mard to compute bounds for the star discrepancy}
\author{Michael Gnewuch\thanks{Institut f\"ur Mathematik, Universit\"at Osnabr\"uck, 
Germany ({\tt michael.gnewuch@uni-osnabrueck.de}).}
}
\begin{document}

\maketitle
\vskip 1pc

\begin{abstract}
We improve the best known upper bound for the bracketing number of $d$-dimensional axis-parallel boxes anchored in $0$
(or, put differently, of lower left orthants intersected with the $d$-dimensional unit cube $[0,1]^d$). 
More precisely, we provide a better 
estimate
for the cardinality of an algorithmic  bracketing cover construction due to Eric Thi\'emard, which 
forms the core of his algorithm to approximate the star discrepancy of arbitrary point sets from
[E. Thi\'emard, An algorithm to compute bounds for the star discrepancy, J.~Complexity 17 (2001), 850 -- 880]. 

Moreover, the new upper bound for the bracketing number of anchored axis-parallel boxes yields an improved upper 
estimate
for the bracketing number of arbitrary axis-parallel boxes in $[0,1]^d$. 

In our upper bounds all constants are fully explicit. 
\end{abstract}

{\bf Keywords:}  
entropy number, 
covering number,
packing number, 
delta cover, 
discrepancy, 
tractability.

\section{Introduction} 

Entropy numbers, like the logarithm of bracketing or covering numbers, are measures of the size and complexity of a given class of functions $\mathcal{F}$, and are frequently used in empirical process theory, density estimation, high-dimensional probability, machine learning, uniform distribution theory, or, Banach space theory, 
see, e.g., \cite{DL01, LT91, Sen22, vdVW96, Ver18}.

In this paper we focus on the bracketing number for characteristic functions of axis-parallel rectangles in the $d$-dimensional unit cube. Nevertheless, we start with the general definition of the bracketing number and put it into relation to other entropy concepts like covering and packing numbers or delta covers.


Let us assume that $\mathcal{F}$ is a subset of a normed space $(F, \|\cdot \|)$ of real-valued functions. For two functions $\ell, u \in F$ with $\ell \le u$ we define the \emph{bracket} 
$[\ell, u ]$ by 
\begin{equation}\label{bracket}
[\ell, u ] := \{ f\in F \,|\, \ell \le f \le u \},
\end{equation}
where inequalities between functions are meant pointwise. The bracket is an \emph{$\varepsilon$-bracket} if its \emph{weight} $W([\ell, u])$ satisfies
\begin{equation}\label{general_weight}
W([\ell, u]) := \| u - \ell \| \le \varepsilon.
\end{equation}
A set of $\varepsilon$-brackets $\mathcal{B}$ is an \emph{$\varepsilon$-bracketing cover} of $\mathcal{F}$ if $\mathcal{F} \subseteq \cup_{B\in \mathcal{B}} B$. 
The \emph{bracketing number $N_{[\,]}(\varepsilon, \mathcal{F}, \|\cdot \|)$} is the minimum number of $\varepsilon$-brackets needed to cover $\mathcal{F}$. 

Let us state a related definition:
A finite subset $\Gamma$ of $F$ is called a (one-sided) $\delta$-cover of $\mathcal{F}$, if for every $h\in \mathcal{F}$ there exists a $\delta$-bracket $[\ell, u]$ with $\ell, u \in \Gamma$ and $h\in [\ell,u]$. Denote by $N(\delta, \mathcal{F}, \|\cdot\|)$ the minimum cardinality of all $\delta$-covers of $\mathcal{F}$.

The main difference between the notion of an $\varepsilon$-bracketing cover $\mathcal{B}$ and a $\delta$-cover $\Gamma$ (for $\delta = \varepsilon$) is that $\mathcal{B}$ consists of $\varepsilon$-brackets, i.e., subsets of $F$, while 
$\Gamma$ consists of functions, i.e., points of $F$. 
It is easy to verify the following relation:
\begin{equation}\label{relation_bracketing_delta_cover}
N(\varepsilon, \mathcal{F}, \|\cdot\|) 
\le 2 N_{[\,]}(\varepsilon, \mathcal{F}, \|\cdot \|)
\le N(\varepsilon, \mathcal{F}, \|\cdot\|) \left( N(\varepsilon, \mathcal{F}, \|\cdot\|) + 1\right).
\end{equation}
The first inequality shows that upper bounds for the bracketing number give us reasonable upper bounds 
for $N(\varepsilon, \mathcal{F}, \|\cdot\|)$, while the second inequality 
seems to be, in general, not overly helpful to provide good upper bounds for the bracketing number with the help of upper bounds for $N(\varepsilon, \mathcal{F}, \|\cdot\|)$.
 
Techniques based on bracketing were introduced in the context of empirical process theory by 
R.~M.~Dudley in 1978 \cite{Dud78}. The notion of a (one-sided) $\delta$-cover was introduced in \cite{Mha04} in the context of approximation theory and was used frequently in the area of uniform distribution theory and quasi-Monte Carlo methods, see, e.g., the survey article \cite{Gne11} and the more recent research papers
\cite{Ais13, AW13,  AH14, DR14, DRZ16, Rud18,  HKKR20, WGH20, GH21, GPW21, FGW23}.


Similarly, we can define the \emph{covering number $C(\varepsilon, \mathcal{F}, \|\cdot \|)$} as the minimum number of closed balls $\{ f \,|\, \|g-f\| \le \varepsilon\}$ in $F$, with some center $g\in F$ and radius $\varepsilon$, needed to cover $\mathcal{F}$. Furthermore, we call a subset $\mathcal{S}$ of $F$ \emph{$\varepsilon$-separated}, if $\| f - g \| > \varepsilon$ for all $f,g \in \mathcal{S}$ with 
$f \neq g$. 
The \emph{packing number $P(\varepsilon, \mathcal{F}, \| \cdot \| )$} is the maximum number of $\varepsilon$-separated functions in $\mathcal{F}$.

Let us assume that $(F, \|\cdot\|)$ satisfies the Riesz property, i.e., for all $f,g \in F$ the pointwise inequality $|f| \le |g|$ implies $\|f\| \le \|g\|$. 
Then it is straightforward to show that
\begin{equation}\label{relation_bracketing_covering_packing}
C(\varepsilon, \mathcal{F}, \| \cdot \|) 
\le P(\varepsilon, \mathcal{F}, \| \cdot \| ) 
\le C(\varepsilon/2, \mathcal{F}, \| \cdot \|) 
\le N_{[\,]}(\varepsilon, \mathcal{F}, \| \cdot \|),
\end{equation}
where the Riesz property is used to establish the final inequality, cf., e.g., \cite[Chapter~2]{vdVW96}. 
The inequalities in 
\eqref{relation_bracketing_covering_packing} show that upper bounds for the bracketing number imply upper bounds for the covering and the packing number, 
but the other way around this is not necessarily the case. 

Let us denote the $d$-dimensional Lebesgue measure by $\lambda^d$. We confine ourselves to the setting where $(F, \|\cdot \|)$ is the space 
$L^1(I^d, \lambda^d)$ of $\lambda^d$-integrable real-valued functions on $I^d := [0,1)^d$ 
and $\mathcal{F}$ is a set of characteristic functions of half-open boxes in the $d$-dimensional unit cube $I^d$. Note that $L^1(I^d, \lambda^d)$
satisfies the Riesz property. 

More precisely, for $x=(x_1, \ldots, x_d)$ and $y=(y_1, \ldots, y_d)$ in $I^d$  put
\[
[x,y) := [x_1, y_1) \times \cdots \times [x_d, y_d),
\]
and let 
\begin{equation}\label{corners_rectangles}
\mathcal{C}_d := \{ [0, x) \,|\, x \in I^d\}
\hspace{3ex}\text{and}\hspace{3ex}
\mathcal{R}_d := \{ [y, z) \,|\, y, z \in I^d\}
\end{equation}
be the set systems of all anchored half-open $d$-dimensional intervals (``corners'')  and all half-open intervals (``rectangles'') in $I^d$, respectively. 
We identify these set systems with the set of characteristic functions
\begin{equation}\label{charistic_fcts_corners_rectangles}
{\bf 1}_{\mathcal{C}_d} := \{ 1_{C} \,|\, C \in \mathcal{C}_d \}
\hspace{3ex}\text{and}\hspace{3ex}
{\bf 1}_{\mathcal{R}_d} := \{ 1_{R} \,|\, R \in \mathcal{R}_d \},
\end{equation}
respectively; as a general convention, we denote the characteristic function of a set $A$ by $1_A$.

For the set systems $\mathcal{C}_d$ and $\mathcal{R}_d$ we would like to have small $\varepsilon$-bracketing covers which can be efficiently constructed. Furthermore, we want good upper bounds for the bracketing numbers 
$N_{[\,]}(\varepsilon, \mathcal{C}_d, \|\cdot \|_{L^1})$ and 
$N_{[\,]}(\varepsilon, \mathcal{R}_d, \|\cdot \|_{L^1})$, which reveal the dependence of these quantities on $\varepsilon$ \emph{and} on $d$. 

The  so far best known upper bound with explicitly given constants for the bracketing number of $\mathcal{C}_d$ is
\begin{equation}\label{bound_GPW}
N_{[\,]}(\varepsilon, \mathcal{C}_d, \|\cdot \|_{L^1})
\le \max (1, 1.1^{d-101}) \frac{d^d}{d!} (\varepsilon^{-1} + 1)^d.
\end{equation} 
It was proved in \cite[Theorem~2.5]{GPW21} and relies on a construction from \cite{Gne08a}; due to the  derivation of a new Faulhaber-type inequality and a refined analysis it improves on the bound presented in 
\cite[Theorem~1.15]{Gne08a}.

Furthermore, \cite[Lemma~1.18]{Gne08a} provides a constructive way 
to obtain an $\varepsilon$-bracketing cover for $\mathcal{R}_d$ with the help of an  $\tfrac{\varepsilon}{2}$-bracketing cover for $\mathcal{C}_d$. In particular, it establishes the relation
\begin{equation}\label{rel_C_R}
N_{[\,]}(\varepsilon, \mathcal{R}_d, \|\cdot \|_{L^1}) 
\le \left( N_{[\,]} \left( \tfrac{\varepsilon}{2}, \mathcal{C}_d, \|\cdot \|_{L^1} \right) \right)^2.
\end{equation}
Combining the estimates \eqref{bound_GPW} and \eqref{rel_C_R} results in the 
so far best known upper bound for the bracketing number of $\mathcal{R}_d$.

In \cite{Thi01a} Eric Thi\'emard gave a simple procedure to partition the unit cube $I^d$, which results in an $\varepsilon$-bracketing cover $\mathcal{P}^d_\varepsilon$ for $\mathcal{C}_d$. This procedure is the key ingredient in his algorithm to compute bounds for the star discrepancy of arbitrary point sets in $I^d$
(for more information about approaches to calculate discrepancy measures we refer to the book chapter \cite{DGW14}).
He also provided an upper bound for the cardinality of his bracketing cover.
Let us focus here in the introduction on the most interesting case where $d\ge 3$. 
(We provide in Theorem~\ref{main_thm} and Remark~\ref{Rem:Improvements} also  results for $d=2$, and discuss the previously known results in two dimensions in Remark~\ref{Rem:Case_d_2}.) 
Thi\'emard's bound reads (after some slight simplification to make it more easily comparable to the other bounds presented in this note)
\begin{equation}\label{bound_thiemard}
| \mathcal{P}^d_\varepsilon | \le \frac{d^d}{d!} \left( \frac{\ln \left( \varepsilon^{-1} \right)}{\varepsilon} + 1 \right)^d
\hspace{3ex}\text{for $d\ge 3$.}
\end{equation}
In this note we make a more refined analysis of Thi\'emard's partitioning procedure 
to derive a substantially better upper bound than \eqref{bound_thiemard}. In particular, we show in Theorem~\ref{main_thm} that 
\begin{equation}\label{improv_bound_d_ge_3}
| \mathcal{P}^d_\varepsilon | \le \frac{d^d}{d!} \left( \frac{1}{\varepsilon} \right)^d
\hspace{3ex}\text{for $d\ge 3$,}
\end{equation}
and provide in Theorem~\ref{main_thm} and Remark~\ref{Rem:Improvements} 
some further (moderate) improvements. 
Note that these bounds improve reasonably on the bound \eqref{bound_thiemard} and yield additionally better upper bounds for the bracketing numbers of $\mathcal{C}_d$ and, via \eqref{rel_C_R}, of $\mathcal{R}_d$. These are good news, since the construction of Thi\'emard is relatively simple and can be implemented  easily, cf. \cite[Algorithm~3]{Thi01a}.  In contrast, the construction from \cite{Gne08a} that leads to the bound \eqref{bound_GPW} 
from \cite{GPW21} is rather complicated. 
The key ingredients for the improvements in Theorem~\ref{main_thm} and Remark~\ref{Rem:Improvements} are the new Lemmas~\ref{Lemma:Representation_delta} 
and \ref{Lemma:Monotonie}.

The newly derived bounds on $N_{[\,]}(\varepsilon, \mathcal{C}_d, \|\cdot \|_{L^1})$ and $N_{[\,]}(\varepsilon, \mathcal{C}_d, \|\cdot \|_{L^1})$ can be used to improve
other bounds as, e.g., the tractability bounds for the usual as well as the weighted star discrepancy and extreme discrepancy in \cite{Gne08a, GPW21, FGW23}.
We confine ourselves here to compare our newly derived upper bound on the $L^1$-packing number $P(\varepsilon, \mathcal{C}_d, \|\cdot \|_{L^1})$ for the set system $\mathcal{C}_d$ with the celebrated upper bound of David Haussler \cite{Hau95} (which is actually applicable to general set systems with finite  Vapnik-Chervonenkis (VC) dimension):
Since the set system $\mathcal{C}_d$ has VC dimension $d$,
Haussler's bound  reads 
\[
P(\varepsilon, \mathcal{C}_d, \|\cdot \|_{L^1}) \le (d+1) 2^d e^{d+1} \varepsilon^{-d},
\]
see \cite[Corollary~1]{Hau95}, while \eqref{improv_bound_d_ge_3} and \eqref{relation_bracketing_covering_packing} yield for $d\ge 3$
\[
P(\varepsilon, \mathcal{C}_d, \|\cdot \|_{L^1}) \le \frac{d^d}{d!} \, \varepsilon^{-d} \le  \frac{1}{\sqrt{2 \pi d}}\, e^{d}\, \varepsilon^{-d}\,;
\]
for the second estimate cf. \cite{Robbins55}.


\section{Revisiting an Algorithm of Eric Thi\'{e}mard}

We now describe Thi\'emard's algorithmic partitioning process of the $d$-dimensional unit cube that results for given $\varepsilon \in (0,1)$ and $d\in\N$ 
after a finite number of steps in an $\varepsilon$-bracketing cover  $\mathcal{P}^d_\varepsilon$ of $\mathcal{C}_d$. 

To simplify matters, we identify each anchored half-open $d$-dimensional interval $[0,x)$ in $\mathcal{C}_d$ with its upper right corner point $x\in [0,1]^d$. Since we already agreed to identify the sets $[0,x)$ in $\mathcal{C}_d$ with their corresponding characteristic functions $1_{[0,x)}$, cf. \eqref{corners_rectangles} and
\eqref{charistic_fcts_corners_rectangles}, this finally results in identifying a point $x\in [0,1]^d$ with the corresponding function $1_{[0,x)}$. In this sense, we can identify for two given points $x,y \in [0,1]^d$ satisfying $x \le y$ (meant component-wise) the intersection of the bracket $[1_{[0,x)}, 1_{[0,y)}]$ (cf. \eqref{bracket})
and ${\bf 1}_{\mathcal{C}_d}$ with the closed $d$-dimensional interval $[x,y]$, which we also call a bracket. Consequently, we define its weight to be  
$$
W([x,y]) := W( [1_{[0,x)}, 1_{[0,y)}] ).
$$ 
Recall that the latter weight 
is given by
\begin{equation*}
W( [1_{[0,x)}, 1_{[0,y)}] ) = \| 1_{[0,y)} - 1_{[0,x)}] \|_{L^1} = \lambda^d([0,y)) - \lambda^d([0,x)),
\end{equation*}
cf. \eqref{general_weight}.
The partitioning process should result in a partition of the $d$-dimensional unit cube; to achieve this it is necessary to work with half-open intervals instead of closed ones. To avoid picky distinctions, we will identify half-open $d$-dimensional intervals $[x,y)$ with their corresponding bracket $[x,y]$
and, consequently, call $[x,y)$ itself a bracket.
Accordingly, the weight of such a bracket is given by 
\begin{equation}\label{weight}
W([x,y)) 
:= \lambda^d([0,y)) - \lambda^d([0,x)) = \prod^d_{j=1} y_j - \prod^d_{j=1} x_j.
\end{equation}
During the partitioning process the $d$-dimensional unit cube $I^d = [0,1)^d$ is recursively decomposed into $d$-dimensional (half-open) subintervals until all those subintervals are 
$\varepsilon$-brackets, i.e., have a weight at most $\varepsilon$. 
More precisely, it starts with $I^d$, which is a subinterval of type $1$, and partitions it into $d$ subintervals $Q^{I^d}_1, \ldots, Q^{I^d}_d$, where $Q^{I^d}_j$ is a subinterval of type $j$, and a 
subinterval  
$Q^{I^d}_{d+1}$ of type $d+1$, which has weight exactly $\varepsilon$; $Q^{I^d}_{d+1}$  is added to the bracketing cover $\mathcal{P}^d_\varepsilon$. 
If for $j\in \{1, \ldots, d\}$ the subinterval $Q^{I^d}_j$ has weight at most $\varepsilon$, it will be added to $\mathcal{P}^d_\varepsilon$; otherwise it will later be partitioned into smaller subintervals.
More generally, in the recursion step, a subinterval 
\[
P= [\alpha, \beta)
\]
of type $j \in \{1,\ldots, d\}$ with $\alpha = \alpha^P, \beta = \beta^P \in [0,1]^d$ and $W(P) > \varepsilon$ 
is partitioned into subintervals
\begin{equation*}
\begin{split}
Q_1^P &= [(\alpha_1, \alpha_2, \ldots, \alpha_d), (\gamma_1, \beta_2, \ldots, \beta_d)),\\
Q_2^P &= [(\gamma_1, \alpha_2, \ldots, \alpha_d), (\beta_1, \gamma_2, \beta_3, \ldots, \beta_d)),\\
\vdots &\hspace{8ex} \vdots \\
Q_d^P &= [(\gamma_1,  \ldots, \gamma_{d-1}, \alpha_d), (\beta_1,  \ldots, \beta_{d-1}, \gamma_d)),\\
Q_{d+1}^P &= [\gamma, \beta),
\end{split}
\end{equation*}
where the vector $\gamma = \gamma^P \in [0,1]^d$ is given as in Theorem~\ref{Thi01a:Theorem3.2}.
In particular, if $j>1$ then we have $Q^P_1 = \cdots = Q^P_{j-1} = \emptyset$ and
all these empty subintervals will not be considered further  (and will, in particular, not be elements of the final bracketing cover). The remaining subintervals $Q^P_k$, $k=j, j+1, \ldots, d$, are subintervals of type $k$, and $Q_{d+1}^P$ is an $\varepsilon$-bracket with $W(Q_{d+1}^P) = \varepsilon$,  which will be added to $\mathcal{P}^d_\varepsilon$.
Thi\'emard called the subroutine of his algorithm that corresponds to this recursion step {\sc decompose}$(P,j)$; we will use the same name to refer to the recursion step above.

If for $k\in \{j, j+1, \ldots, d\}$ the subinterval $Q^P_k$ has weight larger $\varepsilon$, then it will be decomposed further; otherwise it will be added to $\mathcal{P}^d_\varepsilon$.

For the convenience of the reader we restate two major results from \cite{Thi01a} concerning the partitioning process of the unit cube $I^d$, namely \cite[Theorem~3.2]{Thi01a} and \cite[Corollary~3.2]{Thi01a}. Note that formula (9) in \cite[Theorem~3.2]{Thi01a} contains a typo -- the power appearing there  is incorrect. We corrected it in the corresponding formula \eqref{eq:delta_thiemard} below.

\begin{theorem}[\cite{Thi01a}]
\label{Thi01a:Theorem3.2}
For each call of {\sc decompose}$(P,j)$ during the decomposition process of $I^d$,
where $P = [\alpha^P, \beta^P)$, 
we have
\begin{equation}\label{eq:gamma}
\gamma_i^P = 
\begin{cases}
\, \alpha_i^P 
\hspace{2ex}&\text{if $i < j$,}\\
\, \delta^P \beta_i^P
\hspace{2ex} &\text{if $i \ge j$,}
\end{cases}
\end{equation}
where
\begin{equation}\label{eq:delta_thiemard}
\delta^{P} = \left( \frac{\prod_{i=1}^d \beta_i^P - \varepsilon}{\prod_{i=1}^{j-1} \alpha_i^P \prod_{i=j}^d \beta_i^P} \right)^{\frac{1}{d-j+1}}.
\end{equation}
\end{theorem}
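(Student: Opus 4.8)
The plan is to read both formulas off the construction of the subroutine {\sc decompose}$(P,j)$, with essentially all of the work reduced to solving a single scalar equation. First I would recall precisely how {\sc decompose}$(P,j)$ fixes the vector $\gamma^P$. When the subroutine is invoked, $P=[\alpha^P,\beta^P)$ has type $j$, which by the recursive set-up means that the coordinates $1,\dots,j-1$ have already been frozen: inspecting the recursion step one sees that a box $Q^{P'}_k$ of type $k$ agrees with $\gamma^{P'}$ in its lower and with $\beta^{P'}$ in its upper corner in the coordinates $1,\dots,k-1$, so these coordinates are never split again. Hence $\gamma^P_i=\alpha^P_i$ for $i<j$, which is the first line of \eqref{eq:gamma}; to be fully formal this is a short induction over the recursion tree, with $P=I^d$ of type $1$ as a vacuous base case. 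In the active coordinates $i\ge j$ the subroutine places the split point so that all ratios $\gamma^P_i/\beta^P_i$ are equal, say to $\delta^P\in(0,1)$, i.e. $\gamma^P_i=\delta^P\beta^P_i$; the single remaining parameter $\delta^P$ is then pinned down by the defining requirement that the corner bracket $Q^P_{d+1}=[\gamma^P,\beta^P)$ have weight exactly $\varepsilon$.

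It then remains to solve $W(Q^P_{d+1})=\varepsilon$ for $\delta^P$. Using the weight formula \eqref{weight} and the first line of \eqref{eq:gamma},
\[
W\big([\gamma^P,\beta^P)\big)=\prod_{i=1}^d\beta^P_i-\prod_{i=1}^d\gamma^P_i=\prod_{i=1}^d\beta^P_i-(\delta^P)^{d-j+1}\prod_{i=1}^{j-1}\alpha^P_i\prod_{i=j}^d\beta^P_i,
\]
because exactly the $d-j+1$ coordinates $i=j,\dots,d$ each contribute a factor $\delta^P$. Equating the right-hand side with $\varepsilon$, isolating $(\delta^P)^{d-j+1}$ and taking the $(d-j+1)$-th root yields precisely \eqref{eq:delta_thiemard}; in particular this forces the exponent to be $\tfrac{1}{d-j+1}$, which is the point where the misprint in \cite[formula~(9)]{Thi01a} is corrected. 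I would also record the two facts needed for $\gamma^P$ to be well defined: the radicand is nonnegative because {\sc decompose}$(P,j)$ is only called when $W(P)>\varepsilon$, so that $\prod_{i=1}^d\beta^P_i>\varepsilon+\prod_{i=1}^d\alpha^P_i\ge\varepsilon$, and $\delta^P<1$ (equivalently $[\gamma^P,\beta^P)\subseteq P$) follows from $\varepsilon=W(Q^P_{d+1})<W(P)$.

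The computation itself is routine; the only delicate point is the index bookkeeping, namely checking that it is exactly the coordinates $j,\dots,d$ — hence the exponent $d-j+1$ — that carry the factor $\delta^P$, and that the frozen coordinates $1,\dots,j-1$ are inherited unchanged from the parent box. If {\sc decompose} is presented in a purely recursive form rather than with this type bookkeeping made explicit, then writing out that inheritance as an induction over the recursion tree is the one step I would treat carefully; everything else is a one-line application of \eqref{weight}.
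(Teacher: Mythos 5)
The paper itself contains no proof of this statement: it is quoted from \cite{Thi01a}, and in the description of the recursion step the formula \eqref{eq:gamma} is in effect used as the \emph{definition} of $\gamma^P$. Measured against what actually has to be verified, your main computation is correct and is the natural (surely Thi\'emard's own) derivation: with the frozen coordinates $\gamma^P_i=\alpha^P_i$ for $i<j$ (equivalently, the requirement that $Q^P_1=\dots=Q^P_{j-1}=\emptyset$) and the proportional ansatz $\gamma^P_i=\delta^P\beta^P_i$ for $i\ge j$, the condition $W(Q^P_{d+1})=\varepsilon$ together with \eqref{weight} gives $(\delta^P)^{d-j+1}\prod_{i<j}\alpha^P_i\prod_{i\ge j}\beta^P_i=\prod_{i=1}^d\beta^P_i-\varepsilon$, which is exactly \eqref{eq:delta_thiemard}, and $W(P)>\varepsilon$ makes the radicand positive.

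There is, however, a genuine flaw in the well-definedness part of your write-up: $\delta^P<1$ does \emph{not} follow from $\varepsilon=W(Q^P_{d+1})<W(P)$. That inequality only yields $\prod_i\gamma^P_i>\prod_i\alpha^P_i$, i.e.\ the lower bound $(\delta^P)^{d-j+1}>\prod_{i\ge j}\alpha^P_i/\beta^P_i$, whereas $\delta^P<1$ is equivalent to $\prod_{i\ge j}\beta^P_i\bigl(\prod_{i<j}\beta^P_i-\prod_{i<j}\alpha^P_i\bigr)<\varepsilon$, i.e.\ to the statement that the mass already shaved off in the frozen coordinates is smaller than $\varepsilon$. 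This fails for general boxes of weight larger than $\varepsilon$: for $d=2$, $j=2$, $\alpha=(0.01,0)$, $\beta=(1,1)$, $\varepsilon=0.05$ one gets $\delta=95$. It holds for the boxes that actually occur in the recursion, but only because of the inductive structure: if $P=Q^{P'}_j$, then writing the condition in terms of $\gamma^{P'},\beta^{P'}$ it becomes $\gamma^{P'}_j\prod_{i\neq j}\beta^{P'}_i-\prod_{i\le j}\gamma^{P'}_i\prod_{i>j}\beta^{P'}_i<\varepsilon$, which follows from the fact that the parent's corner box had weight exactly $\varepsilon$ (Corollary~\ref{Thi01a:Corollary3.2}), i.e.\ $\prod_i\beta^{P'}_i-\prod_i\gamma^{P'}_i=\varepsilon$, together with $\gamma^{P'}_j<\beta^{P'}_j$; so it requires an induction along the recursion tree, not just $W(P)>\varepsilon$. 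This does not affect the theorem as stated here, since $\delta^P\in(0,1)$ is recorded separately in Remark~\ref{Rem:**} with a citation to \cite{Thi01a}, but as written your argument for that claim is incorrect and should either be dropped or replaced by the induction sketched above.
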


\begin{corollary}[\cite{Thi01a}]
\label{Thi01a:Corollary3.2}
For each call of {\sc decompose}$(P,j)$ during the decomposition process of $I^d$, the interval $P= [\alpha^P, \beta^P)$ is partitioned into exactly $d-j+2$ subintervals, the $d-j+1$ subintervals $Q^P_j, \ldots, Q^P_d$ of common weight $\delta^P W(P)$, and the subinterval $Q_{d+1}^P = [\gamma^P, \beta^P)$ of weight $\varepsilon$.
\end{corollary}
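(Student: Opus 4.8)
The plan is to establish the three assertions of the corollary in turn. The first — that $P$ is split into exactly $d-j+2$ genuine subintervals, namely $Q^P_j,\ldots,Q^P_d,Q^P_{d+1}$ — is almost immediate from \eqref{eq:gamma}: for $k<j$ the $k$-th edge of $Q^P_k$ equals $[\alpha^P_k,\gamma^P_k)=[\alpha^P_k,\alpha^P_k)=\emptyset$, so $Q^P_1,\ldots,Q^P_{j-1}$ are empty and only $d-j+2$ boxes remain. That these tile $P$ I would obtain by a short peeling argument: for $m=j,j+1,\ldots,d+1$ let $B_m$ be the box obtained from $P$ by replacing, for each $i<m$, the edge $[\alpha^P_i,\beta^P_i)$ by $[\gamma^P_i,\beta^P_i)$; then $B_j=P$ (since $\gamma^P_i=\alpha^P_i$ for $i<j$), $B_{d+1}=Q^P_{d+1}=[\gamma^P,\beta^P)$, and splitting the $m$-th edge as $[\alpha^P_m,\beta^P_m)=[\alpha^P_m,\gamma^P_m)\cup[\gamma^P_m,\beta^P_m)$ shows, for $j\le m\le d$, that $B_m=Q^P_m\cup B_{m+1}$ with the two parts disjoint; iterating gives $P=Q^P_j\cup\cdots\cup Q^P_d\cup Q^P_{d+1}$ as a disjoint union. (This uses $\alpha^P_i\le\gamma^P_i\le\beta^P_i$, that is $0\le\delta^P\le1$, a property of the construction that I would import from \cite{Thi01a}, or re-derive by an induction of the same type as the one below.)

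The core of the proof is a structural invariant of the boxes produced by the recursion, which I would isolate as a preliminary claim: \emph{in every call {\sc decompose}$(P,j)$ occurring during the decomposition of $I^d$ one has $\alpha^P_i=0$ for all $i\ge j$} (and, as companion facts that guarantee $\delta^P$ in \eqref{eq:delta_thiemard} is a well-defined number in $(0,1]$, one has $\alpha^P_i>0$ for $i<j$ and $0<\beta^P_i\le1$ for all $i$). This is proved by induction along the recursion tree. For the root $I^d$, which has type $1$ and $\alpha^{I^d}=\bf 0$, the claim holds. If it holds for some $P$ of type $j$ and $P$ gets decomposed, then for $k\in\{j,\ldots,d\}$ the child $Q^P_k$ has type $k$ and lower-left corner $(\gamma^P_1,\ldots,\gamma^P_{k-1},\alpha^P_k,\alpha^P_{k+1},\ldots,\alpha^P_d)$; its coordinates of index $i\ge k$ are exactly the $\alpha^P_i$, and since $i\ge k\ge j$ these vanish by the inductive hypothesis, so $Q^P_k$ again satisfies the invariant for its own type $k$. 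The companion positivity statements are checked in the same step (using $0<\delta^P\le1$ for $P$).

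Granting the invariant, the two weight identities are short computations with \eqref{weight}, \eqref{eq:gamma} and \eqref{eq:delta_thiemard}. Since $\alpha^P_j=0$ we have $\prod_{i=1}^d\alpha^P_i=0$, hence $W(P)=\prod_{i=1}^d\beta^P_i$. For $k\in\{j,\ldots,d\}$ the lower-left corner of $Q^P_k$ has $k$-th coordinate $\alpha^P_k=0$, so the product of its coordinates is $0$, while its upper-right corner has every coordinate equal to $\beta^P_i$ except the $k$-th, which by \eqref{eq:gamma} equals $\gamma^P_k=\delta^P\beta^P_k$; therefore $W(Q^P_k)=\delta^P\prod_{i=1}^d\beta^P_i=\delta^P W(P)$, independently of $k$. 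For $Q^P_{d+1}=[\gamma^P,\beta^P)$, \eqref{eq:gamma} gives $\prod_{i=1}^d\gamma^P_i=\bigl(\prod_{i=1}^{j-1}\alpha^P_i\bigr)(\delta^P)^{d-j+1}\prod_{i=j}^d\beta^P_i$, and inserting the value of $(\delta^P)^{d-j+1}$ from \eqref{eq:delta_thiemard} collapses the right-hand side to $\prod_{i=1}^d\beta^P_i-\varepsilon$; hence $W(Q^P_{d+1})=\prod_{i=1}^d\beta^P_i-\prod_{i=1}^d\gamma^P_i=\varepsilon$.

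The main obstacle is the preliminary claim, not the arithmetic that follows it: without the vanishing of the lower-left coordinates of index $\ge j$, the boxes $Q^P_j,\ldots,Q^P_d$ would in general receive pairwise distinct weights, so the common value $\delta^P W(P)$ — on which the later cardinality estimate for $\mathcal{P}^d_\varepsilon$ rests — would simply be false. Two further points deserve care: the induction must be organised along the recursion tree (not along $j$ or the ambient dimension), and one must use the corrected exponent $\tfrac{1}{d-j+1}$ in \eqref{eq:delta_thiemard}, since it is precisely this exponent that makes $\prod_{i=1}^d\gamma^P_i=\prod_{i=1}^d\beta^P_i-\varepsilon$, and hence $W(Q^P_{d+1})=\varepsilon$, come out correctly.
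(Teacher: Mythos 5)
Your proof is correct, but note that the paper itself never proves this statement: it is restated verbatim from Thi\'emard's article \cite{Thi01a} (``for the convenience of the reader''), so there is no in-paper argument to compare against line by line. Your reconstruction is a legitimate self-contained proof and is consistent with the fragments the paper does record. The one structural difference worth pointing out is the direction of the logic around the invariant: the paper only quotes the weaker fact $\alpha^P_d=0$ (its \eqref{eq:alphas_0}, citing Thi\'emard's proof of Lemma~3.2), uses it to get $W(P)=\prod_i\beta_i^P$, and then reads off the common weight \eqref{eq:weight_Q_i_P} \emph{from} Corollary~\ref{Thi01a:Corollary3.2}; you instead prove the stronger invariant $\alpha^P_i=0$ for all $i\ge j$ by induction along the recursion tree and deduce the corollary from it. That stronger invariant is true and your induction for it is sound (the child $Q^P_k$ of type $k$ inherits the coordinates $\alpha^P_i$ for $i\ge k\ge j$), although for the weight computations the single coordinate $\alpha^P_d=0$ would already suffice, since it kills the product of lower-corner coordinates of every $Q^P_k$, $k\le d$. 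Your peeling decomposition $B_m=Q^P_m\,\dot\cup\,B_{m+1}$ correctly establishes the disjoint tiling, and the two weight identities ($W(Q^P_k)=\delta^P W(P)$ and $W(Q^P_{d+1})=\varepsilon$ via cancellation of the exponent $\tfrac1{d-j+1}$ in \eqref{eq:delta_thiemard}) are exactly right; the only external input you use is $\delta^P\in(0,1)$ (equivalently $\alpha^P_i\le\gamma^P_i\le\beta^P_i$), which is the same fact the paper itself imports from \cite[Section~3.2]{Thi01a} in Remark~\ref{Rem:**}, so acknowledging that import (together with your companion positivity claim ensuring the denominator in \eqref{eq:delta_thiemard} is nonzero) is entirely in keeping with the paper's own treatment.
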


In the following remark we collect some helpful observations.

\begin{remark}\label{Rem:**}
During the decomposition process the $\alpha$-component of index $d$ is preserved for all subintervals that are not of type $d+1$: 
\begin{equation}\label{eq:alphas_0}
\alpha^{Q_i^P}_d = \alpha^P_d = \cdots = \alpha^{I^d}_d = 0,
\end{equation}
cf. \cite[Proof of Lemma~3.2]{Thi01a}. Hence we can infer for all subintervals $P$ 
appearing in the decomposition process and being of type $j < d+1$ that
\begin{equation}\label{eq:weight_P_only_betas_appear}
W(P) = \prod_{i=1}^d \beta_i^{P} - \prod_{i=1}^d \alpha_i^{P}  
= \prod_{i=1}^d \beta_i^{P}.
\end{equation}
Furthermore, we have  $Q_j^P = [ \alpha^{Q_j^P}, \beta^{Q_j^P} )$ with
\begin{equation}\label{eq:alpha_beta_gamma}
\alpha_i^{Q_j^P} = 
\begin{cases}
\, \gamma_i^P 
\hspace{2ex}&\text{if $i < j$},\\
\, \alpha_i^P
\hspace{2ex} &\text{if $i \ge j$,}
\end{cases}
\hspace{5ex}
\beta_i^{Q_j^P} = 
\begin{cases}
\, \gamma_i^P 
\hspace{2ex}&\text{if $i = j$},\\
\, \beta_i^P
\hspace{2ex} &\text{if $i \neq j$,}
\end{cases}
\end{equation}
see \cite[identity (4)]{Thi01a}.
Let $P$ be a subinterval of type $j$. Then we get from
\eqref{eq:weight_P_only_betas_appear}
and Corollary~\ref{Thi01a:Corollary3.2}
for each $i\in \{j, \ldots, d\}$ the identity
\begin{equation}\label{eq:weight_Q_i_P}
W(Q_i^{P}) = \prod_{\ell=1}^d \beta_\ell^{Q_i^P} =  \delta^{P} W(P).
\end{equation}
We always have $\delta^P \in (0,1)$, cf. \cite[Section~3.2]{Thi01a}.  
\end{remark}

To be able to improve Thi\'emard's upper bound for the cardinality of 
$\mathcal{P}^d_\varepsilon$, we need a suitable recursion formula that shows how the parameter $\delta=\delta^P$ evolves in the course of the partitioning process.
The desired formula will be proved in the next lemma.

\begin{lemma}\label{Lemma:Representation_delta}
For each call of {\sc decompose}$(P,j)$ during the decomposition process of $I^d$
where $\delta^{P} W(P) > \varepsilon$,
we obtain for every $i\in \{j, \ldots, d\}$  in  {\sc decompose}$(Q^P_i,i)$
\begin{equation}\label{eq:representation_delta}
\delta^{Q_i^{P}} = \left( \frac{W(P) - \varepsilon/\delta^P}{W(P) - \varepsilon} \right)^{\frac{1}{d-i+1}}\delta^{P}.
\end{equation}
The quantity $\delta^{Q_i^{P}}$ is strictly monotone increasing with respect to the parameters $\delta^P$ and $W(P)$ and strictly monotone decreasing with respect to the parameter $i$.
\end{lemma}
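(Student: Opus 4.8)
The plan is to derive the recursion \eqref{eq:representation_delta} directly from the explicit formula \eqref{eq:delta_thiemard} for $\delta^P$, applied both to $P$ and to $Q_i^P$. First I would write down $\delta^{Q_i^P}$ using \eqref{eq:delta_thiemard}: since $Q_i^P$ is a subinterval of type $i$, its exponent is $\tfrac{1}{d-i+1}$, and by \eqref{eq:alpha_beta_gamma} its $\beta$-components are those of $P$ except that $\beta_i$ is replaced by $\gamma_i^P$. Using \eqref{eq:weight_P_only_betas_appear} the numerator inside the bracket is $W(Q_i^P) - \varepsilon$, and by \eqref{eq:weight_Q_i_P} this equals $\delta^P W(P) - \varepsilon$. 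For the denominator, the product over the $\alpha$-factors and $\beta$-factors that define $\delta^{Q_i^P}$ simplifies: all $\beta$-factors of $Q_i^P$ with index $\ge i$ appear except the $i$-th, which has been shrunk to $\gamma_i^P = \delta^P\beta_i^P$ (by \eqref{eq:gamma}); so the denominator is $\delta^P$ times the corresponding product for $P$. Since $P$ is of type $j\le i$ and the $\alpha$-components with index between $j$ and $i-1$ are the $\gamma$-components carried over, one checks that the relevant denominator product for $Q_i^P$ equals $\delta^P \cdot W(P)$ up to the bookkeeping of which factors are $\alpha$'s versus $\beta$'s — the cleanest route is to observe directly that the denominator of $\delta^{Q_i^P}$ is exactly $\delta^P$ times the denominator-analogue, which by the same type of computation used to prove Corollary~\ref{Thi01a:Corollary3.2} collapses to $W(P)$. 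Putting these together,
\[
\delta^{Q_i^P} = \left( \frac{\delta^P W(P) - \varepsilon}{\delta^P W(P)} \right)^{\frac{1}{d-i+1}},
\]
and then I would algebraically rearrange the ratio $\frac{\delta^P W(P) - \varepsilon}{\delta^P W(P)}$: dividing numerator and denominator by $\delta^P$ and factoring gives $\frac{W(P) - \varepsilon/\delta^P}{W(P)}$, and multiplying the whole bracket by $\delta^P$ (taken inside as $(\delta^P)^{d-i+1}$, but it is cleaner to keep it outside) and using $W(Q_i^P) = \delta^P W(P)$ once more to re-express $\delta^P W(P)$ as $W(P) - \varepsilon \cdot \tfrac{W(P)}{\delta^P W(P)}$... — more precisely, note $\frac{\delta^P W(P) - \varepsilon}{\delta^P W(P)} = \frac{W(P) - \varepsilon/\delta^P}{W(P)}$, so that $\delta^{Q_i^P} = \bigl(\tfrac{W(P)-\varepsilon/\delta^P}{W(P)}\bigr)^{1/(d-i+1)}$; comparing with the claimed \eqref{eq:representation_delta} I must still reconcile the denominator $W(P)$ with $W(P)-\varepsilon$ and the extra factor $\delta^P$. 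This is handled by writing $\delta^P = \bigl(\tfrac{W(P)-\varepsilon}{W(P)}\bigr)^{1/(d-j+1)}$ from \eqref{eq:delta_thiemard}--\eqref{eq:weight_P_only_betas_appear} when $P$ has type $j$ — wait, the exponent there is $\tfrac{1}{d-j+1}$, not $\tfrac{1}{d-i+1}$ — so one instead keeps $\delta^P$ as an explicit factor: from $\delta^{Q_i^P} = \bigl(\tfrac{W(P)-\varepsilon/\delta^P}{W(P)}\bigr)^{1/(d-i+1)}$ and $(\delta^P)^{d-j+1} = \tfrac{W(P)-\varepsilon}{W(P)}$ one substitutes $W(P)$ in the denominator by $\tfrac{W(P)-\varepsilon}{(\delta^P)^{d-j+1}}$... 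I expect the correct bookkeeping to produce exactly $\delta^{Q_i^P} = \bigl(\tfrac{W(P)-\varepsilon/\delta^P}{W(P)-\varepsilon}\bigr)^{1/(d-i+1)}\delta^P$ after the dust settles, since the exponent mismatch is precisely compensated by the fact that $P$'s type $j$ enters through $\delta^P$ itself; the key identity to exploit is $\delta^P W(P) - \varepsilon = W(P) - \varepsilon/\delta^P$ divided through appropriately, i.e. $\tfrac{\delta^P W(P)-\varepsilon}{\delta^P W(P)} = \tfrac{W(P)-\varepsilon/\delta^P}{W(P)}$ together with $\tfrac{W(P)}{W(P)-\varepsilon} = (\delta^P)^{-(d-j+1)}$, but since only one factor of $\delta^P$ is needed outside, the safest derivation is to start from \eqref{eq:delta_thiemard} for $Q_i^P$ verbatim and simplify without detouring through the type-$j$ exponent at all. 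I will present the computation in that verbatim form.

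For the monotonicity claims, having established \eqref{eq:representation_delta}, I would treat
\[
\delta^{Q_i^P} = \left( \frac{W(P) - \varepsilon/\delta^P}{W(P) - \varepsilon} \right)^{\frac{1}{d-i+1}}\delta^P
\]
as a function of the three parameters, under the standing hypothesis $\delta^P W(P) > \varepsilon$ (so the numerator $W(P) - \varepsilon/\delta^P > 0$) and recalling $\delta^P \in (0,1)$ and $W(P) > \varepsilon$ (from $W(P) \ge \delta^P W(P) > \varepsilon$), so that the base of the power lies in $(0,1)$ and all quantities are positive. Monotonicity in $i$ is immediate: the base is in $(0,1)$, and increasing $i$ increases the exponent $\tfrac{1}{d-i+1}$, hence decreases the power, while the factor $\delta^P$ is independent of $i$; so $\delta^{Q_i^P}$ is strictly decreasing in $i$. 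For monotonicity in $W(P)$ with $\delta^P$ fixed, I would show the base $g(w) := \tfrac{w - \varepsilon/\delta^P}{w - \varepsilon}$ is strictly increasing in $w$ on $(\varepsilon/\delta^P, \infty)$: writing $g(w) = 1 + \tfrac{\varepsilon - \varepsilon/\delta^P}{w-\varepsilon} = 1 - \tfrac{\varepsilon(1/\delta^P - 1)}{w-\varepsilon}$, and since $1/\delta^P - 1 > 0$, the subtracted term is positive and strictly decreasing in $w$, so $g$ strictly increases; raising to a positive power and multiplying by the fixed positive $\delta^P$ preserves strict monotonicity. For monotonicity in $\delta^P$ with $W(P)$ fixed, I would note $\delta^{Q_i^P}$ is a product of two factors, each strictly increasing in $\delta^P$: the explicit factor $\delta^P$ trivially, and the base $\tfrac{W(P)-\varepsilon/\delta^P}{W(P)-\varepsilon}$ because its numerator $W(P) - \varepsilon/\delta^P$ is strictly increasing in $\delta^P$ (as $\varepsilon/\delta^P$ is strictly decreasing) while its denominator is constant; raising to the positive power $\tfrac{1}{d-i+1}$ preserves this, and a product of positive strictly increasing functions is strictly increasing.

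I expect the main obstacle to be the first paragraph: correctly tracking, in the formula \eqref{eq:delta_thiemard} written for $Q_i^P$, which coordinates of $Q_i^P$ are $\alpha$-type and which are $\beta$-type, and verifying that the denominator product $\prod_{\ell=1}^{i-1}\alpha_\ell^{Q_i^P}\prod_{\ell=i}^d \beta_\ell^{Q_i^P}$ simplifies — via \eqref{eq:gamma}, \eqref{eq:alpha_beta_gamma}, \eqref{eq:weight_P_only_betas_appear}, \eqref{eq:weight_Q_i_P} and the relation $\alpha_d^{Q_i^P}=0$ from \eqref{eq:alphas_0} — to exactly $\delta^P W(P)$, so that the bracket becomes $\tfrac{\delta^P W(P)-\varepsilon}{\delta^P W(P)}$. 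Once that identity is in hand, the algebraic rearrangement into the stated form and all three monotonicity assertions are routine. The monotonicity arguments themselves are elementary calculus/inequality manipulations and should be written compactly.
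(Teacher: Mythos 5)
Your derivation of \eqref{eq:representation_delta} contains a genuine error at the crucial step: the denominator $\prod_{\ell=1}^{i-1}\alpha_\ell^{Q_i^P}\prod_{\ell=i}^{d}\beta_\ell^{Q_i^P}$ does \emph{not} simplify to $\delta^P W(P)$. Using \eqref{eq:alpha_beta_gamma} this product equals $\prod_{\ell=1}^{i}\gamma_\ell^P\prod_{\ell=i+1}^{d}\beta_\ell^P$, and since $\gamma_\ell^P=\delta^P\beta_\ell^P$ for all $\ell\ge j$ (in particular for all $\ell>i\ge j$) you can trade the trailing $\beta$-factors for $\gamma$-factors, giving $(\delta^P)^{i-d}\prod_{\ell=1}^{d}\gamma_\ell^P=(\delta^P)^{i-d}\bigl(W(P)-\varepsilon\bigr)$, where the last equality uses \eqref{weight}, \eqref{eq:weight_P_only_betas_appear} and $W(Q_{d+1}^P)=\varepsilon$. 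Plugging this together with $W(Q_i^P)=\delta^P W(P)$ into \eqref{eq:delta_thiemard} yields
\[
\delta^{Q_i^P}=\left(\frac{\delta^P W(P)-\varepsilon}{(\delta^P)^{i-d}\bigl(W(P)-\varepsilon\bigr)}\right)^{\frac{1}{d-i+1}}=\delta^P\left(\frac{W(P)-\varepsilon/\delta^P}{W(P)-\varepsilon}\right)^{\frac{1}{d-i+1}},
\]
which is exactly \eqref{eq:representation_delta} with no loose ends. Your claim that the denominator ``collapses to $W(P)$'' is false whenever $i>j$: for example with $d=2$, $\varepsilon=0.1$, $P=I^2$, $i=2$, the denominator is $\gamma_1^P\gamma_2^P=W(P)-\varepsilon=0.9$, not $\delta^P W(P)\approx 0.9487$. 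The patch you then reach for, $\delta^P=\bigl(\tfrac{W(P)-\varepsilon}{W(P)}\bigr)^{1/(d-j+1)}$, is also not valid in general, since the denominator in \eqref{eq:delta_thiemard} for a type-$j$ interval is $\prod_{\ell<j}\alpha_\ell^P\prod_{\ell\ge j}\beta_\ell^P$, which equals $W(P)$ only when $j=1$; this is why you found the exponents would not reconcile, and why you ended the paragraph without actually closing the computation. Your three monotonicity arguments, on the other hand, are correct and are exactly the kind of elementary verification the paper leaves to the reader.
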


\begin{proof}
We obtain from \eqref{eq:alpha_beta_gamma} 
and \eqref{eq:gamma}
\[
\prod_{\ell =1}^{i-1} \alpha_\ell^{Q_i^P} \prod_{\ell=i}^d \beta_\ell^{Q_i^P}  
= \prod_{\ell =1}^{i} \gamma_\ell^{P} \prod_{\ell=i+1}^d \beta_\ell^{P}  
= (\delta^P)^{i-d} \prod_{\ell =1}^{d} \gamma_\ell^{P},
\]
and from \eqref{weight}, \eqref{eq:weight_P_only_betas_appear}, and Corollary~\ref{Thi01a:Corollary3.2}
\[
\prod_{\ell =1}^{d} \gamma_\ell^{P} 
= \prod_{i=1}^d \beta_i^{P} - W([\gamma^P, \beta^P)) 
= W(P) - \varepsilon.
\]
Plugging these findings and identity \eqref{eq:weight_Q_i_P} into equation \eqref{eq:delta_thiemard} yields 
identity \eqref{eq:representation_delta}. The monotonicity statements are now easily verified.
\end{proof}

Let us introduce some additional notation:
We put 
\[
Q^{(0)}_0 := I^d 
\hspace{3ex} \text{and}\hspace{3ex}
Q^{(1)}_{j_1} := Q^{Q^{(0)}_0 }_{j_1}
\hspace{3ex} \text{for $j_1 = 1,\ldots, d$}.
\]
For $r\in\N$ and $\bsj \in S^r$, where
\[
S^r := \{ \bsi \in \N^r\, |\, 1 \le i_1 \le i_2 \le \ldots \le i_r \le d\},
\] 
we define for an element $Q^{(r)}_{\bsj}$ of the decomposition process with
$W(Q^{(r)}_{\bsj}) > \varepsilon$
\[
Q^{(r+1)}_{(\bsj, j_{r+1})} :=  Q^{Q^{(r)}_{\bsj} }_{j_{r+1}}
\hspace{3ex} \text{for $j_{r+1} = j_r,\ldots, d$}.
\]
By convention, we put $S^0:= \{0\}$. Furthermore, we denote for $r\in\N$ the vector $(1, \ldots, 1) \in \N^r$ by ${\bf 1}_r$. 

\begin{definition}\label{height_partition}
For given $d\in\N$ and $\varepsilon \in (0,1)$ let 
the \emph{height of the partition $\mathcal{P}^d_\varepsilon$}, denoted by $h = h(d, \varepsilon)$, be the largest number in $\N$ such that in the partitioning process there exists
a $\bsj \in S^{h-1}$ with $W(Q^{(h-1)}_{\bsj}) > \varepsilon$. 
\end{definition}



To prove his upper bound for the cardinality of $\mathcal{P}^d_\varepsilon$, Thi\'{e}mard represented the elements of $\mathcal{P}^d_\varepsilon$ as vertices in a certain type of planar graph, namely a triangular tree. 
We restate the definition and the (for us) most relevant result for triangular trees from \cite[Section~3.4]{Thi01a}:

\begin{definition}
Let $w \in\N$ and $h \in \N_0$. The \emph{triangular tree} $T^w_h$ of \emph{height} $h$ and \emph{width} $w$ is recursively defined:
\begin{itemize}
\item $T^w_0$ is a leaf.
\item If $h\in \N$, then $T^w_h$ is a tree made up of a root with $w$ attached triangular trees $T^w_{h-1}, \ldots, T^1_{h-1}$.
\end{itemize}
\end{definition}

\begin{theorem}[\cite{Thi01a}]\label{Thm:Leaves}
Let $w \in\N$ and $h \in \N_0$. The \emph{triangular tree} $T^w_h$ has $N^w_h := \binom{h+w-1}{w-1}$ leaves.
\end{theorem}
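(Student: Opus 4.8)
The plan is to prove the leaf‑count formula $N^w_h=\binom{h+w-1}{w-1}$ by induction on the height $h$, using the recursive definition of $T^w_h$ directly. The base case $h=0$ is immediate: $T^w_0$ is a single leaf, and $\binom{0+w-1}{w-1}=\binom{w-1}{w-1}=1$, so the formula holds for every width $w\in\N$.

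For the induction step, fix $h\in\N$ and assume the formula $N^{w'}_{h-1}=\binom{h-1+w'-1}{w'-1}$ holds for all widths $w'\in\N$. By definition, $T^w_h$ consists of a root with the $w$ subtrees $T^w_{h-1},T^{w-1}_{h-1},\ldots,T^1_{h-1}$ attached; the root itself is not a leaf, so the leaves of $T^w_h$ are exactly the leaves of these $w$ subtrees. Hence
\[
N^w_h=\sum_{w'=1}^{w} N^{w'}_{h-1}=\sum_{w'=1}^{w}\binom{h-1+w'-1}{w'-1}=\sum_{k=0}^{w-1}\binom{h-1+k}{k},
\]
where I have substituted $k=w'-1$. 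The remaining task is the combinatorial identity $\sum_{k=0}^{w-1}\binom{h-1+k}{k}=\binom{h+w-1}{w-1}$, which is the hockey‑stick identity: rewriting $\binom{h-1+k}{k}=\binom{h-1+k}{h-1}$, the sum telescopes via $\binom{h+w-1}{w-1}=\binom{h-1}{h-1}+\sum_{k=1}^{w-1}\binom{h-1+k}{h-1}$, proved by a short secondary induction on $w$ using Pascal's rule $\binom{n}{m}+\binom{n}{m+1}=\binom{n+1}{m+1}$. This closes the induction and establishes the claim for all $w\in\N$, $h\in\N_0$.

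There is essentially no hard part here; the only thing to be careful about is bookkeeping in the recursive definition — namely that the widths of the attached subtrees run through $w,w-1,\ldots,1$ rather than staying at $w$, which is precisely what produces the summation and hence the binomial coefficient rather than a power. One could alternatively give a direct bijective proof: leaves of $T^w_h$ correspond to the root‑to‑leaf paths, and such a path is encoded by the weakly decreasing sequence of widths $w=w_0\ge w_1\ge\cdots\ge w_h\ge 1$ of the subtrees visited, equivalently by a multiset of size $h$ from $\{1,\ldots,w\}$ (the differences), of which there are $\binom{h+w-1}{w-1}$. I would most likely present the short inductive argument above, since it matches the recursive style of Thiémard's definitions and keeps the paper self‑contained, but I would mention the bijective viewpoint as a remark, since it also explains transparently why the same multiset structure governs the index set $S^r$ of the decomposition process introduced just before the theorem.
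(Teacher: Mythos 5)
Your proof is correct; the paper itself does not reproduce an argument but defers to \cite[Section~3.4]{Thi01a}, where the count is obtained in essentially the same way you propose, namely induction on the height $h$ combined with the binomial summation (hockey-stick) identity $\sum_{k=0}^{w-1}\binom{h-1+k}{k}=\binom{h+w-1}{w-1}$. So your argument matches the standard one, and your bijective remark via weakly decreasing width sequences is a valid (and nice) alternative, consistent with the index sets $S^r$ used in the paper.
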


A proof of the theorem can be found in \cite[Section~3.4]{Thi01a}.

Now fix $d\in \N$ and $\varepsilon \in (0,1)$, and let $h=h(d,\varepsilon)$ be the height of the partition $\mathcal{P}^d_\varepsilon$.
In the following we explain how Thi\'{e}mard related 
the bracketing cover $\mathcal{P}^d_\varepsilon$ 
to a subtree $S^{d+1}_h$ of $T^{d+1}_h$  having the same root such that there is a one-to-one correspondence between the elements of  $\mathcal{P}^d_\varepsilon$ and the leaves of
$S^{d+1}_h$. Indeed, we can represent all $d$-dimensional intervals that appear in the decomposition process of $I^d$ as vertices in a tree. Its root is the $d$-dimensional unit cube $I^d$ itself, and if $P$ is an interval of type $j$ that is generated in the course of the decomposition process, then it is a vertex of the tree and two things may happen: 
If $W(P) \le \varepsilon$ (which is, in particular, the case if $j=d+1$), then $P$ is a leaf (which corresponds to the fact that $P$ will not be decomposed further); else
$P$ has $d-j+2$ children (which corresponds to the fact that $P$ will be decomposed via {\sc Decompose}$(P,j)$ into $d-j+2$ subintervals).
Since the decomposition process terminates after finitely many steps, the resulting tree $S^{d+1}_h$ is obviously a subtree of a triangular tree $T^{d+1}_h$
of height $h=h(d,\varepsilon)$. 

Clearly, the number of leaves in $S^{d+1}_h$ is at most the number of leaves in $T^{d+1}_h$. Hence, due to Theorem~\ref{Thm:Leaves}, this would result in 
\begin{equation}\label{Thi_est_Partition}
|\mathcal{P}^d_\varepsilon| \le \binom{h+ d}{d},
\end{equation}
see \cite[Theorem~3.4]{Thi01a}.


The next lemma is crucial for being able to prove a good upper bound on $h$. 

 \begin{lemma}\label{Lemma:Monotonie}
 Let $r\in \{1, \ldots, h\}$, and let $\bsi, \bsj \in S^r$ such that $i_\nu \le j_\nu$ 
 for $\nu =1, \ldots, r$. 
 Assume that $Q^{(r)}_{\bsi}$ and $Q^{(r)}_{\bsj}$ appear in the decomposition process of $I^d$. 
Then we have
 \begin{equation}\label{sharp_1}
 W(Q^{(r)}_{\bsi}) \ge W(Q^{(r)}_{\bsj}) 
  \end{equation}
and, if $W(Q^{(r)}_{\bsj}) > \varepsilon$,
 \begin{equation}\label{sharp_2}
 \delta^{Q^{(r)}_{\bsi}} \ge  \delta^{Q^{(r)}_{\bsj}}.
 \end{equation}
Inequality \eqref{sharp_1} is strict if and only if for some $\nu \in \{ 1, \ldots, r-1\}$ we have $i_\nu < j_\nu$, while
inequality \eqref{sharp_2} is strict if and only if there exists some
 $\nu \in \{ 1, \ldots, r\}$ such that $i_\nu < j_\nu$.
 
 \end{lemma}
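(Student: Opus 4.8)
The plan is to prove both inequalities simultaneously by induction on the depth $r$. The two statements are coupled: inequality \eqref{sharp_1} at level $r$ together with \eqref{sharp_2} at level $r$ feed into \eqref{sharp_1} and \eqref{sharp_2} at level $r+1$ via the recursion \eqref{eq:representation_delta} from Lemma~\ref{Lemma:Representation_delta}. So I would set up a joint induction with hypothesis: for all pairs $\bsi \preceq \bsj$ in $S^r$ (componentwise) that appear in the decomposition process, $W(Q^{(r)}_{\bsi}) \ge W(Q^{(r)}_{\bsj})$, and if the right-hand weight exceeds $\varepsilon$ then $\delta^{Q^{(r)}_{\bsi}} \ge \delta^{Q^{(r)}_{\bsj}}$, with the stated strictness conditions.

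For the base case $r=1$: here $\bsi = (i_1)$, $\bsj = (j_1)$ with $i_1 \le j_1$, and both $Q^{(1)}_{i_1} = Q^{I^d}_{i_1}$, $Q^{(1)}_{j_1} = Q^{I^d}_{j_1}$ come from the single call \textsc{decompose}$(I^d, 1)$. By Corollary~\ref{Thi01a:Corollary3.2} all of $Q^{I^d}_1, \ldots, Q^{I^d}_d$ have the \emph{same} weight $\delta^{I^d} W(I^d)$, so \eqref{sharp_1} holds with equality — consistent with the strictness claim, since at $r=1$ there is no index $\nu \in \{1,\ldots,r-1\}$. For \eqref{sharp_2}, each $Q^{(1)}_{i_1}$ is itself a subinterval of type $i_1$, and the recursion formula \eqref{eq:representation_delta} (applied with $P = I^d$, $j=1$) gives $\delta^{Q^{(1)}_{i_1}} = \big((W(I^d)-\varepsilon/\delta^{I^d})/(W(I^d)-\varepsilon)\big)^{1/(d-i_1+1)}\delta^{I^d}$; the base of the power lies in $(0,1)$ (this needs a short check: $\delta^{I^d}\in(0,1)$ by Remark~\ref{Rem:**}, so $\varepsilon/\delta^{I^d} > \varepsilon$), hence the expression is strictly decreasing in $i_1$, giving \eqref{sharp_2} with strictness exactly when $i_1 < j_1$.

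For the inductive step, write $\bsi = (\bsi', i_r)$ and $\bsj = (\bsj', j_r)$ with $\bsi' \preceq \bsj'$ in $S^{r-1}$ and $i_r \le j_r$. The parents $P_{\bsi} := Q^{(r-1)}_{\bsi'}$ and $P_{\bsj} := Q^{(r-1)}_{\bsj'}$ satisfy the induction hypothesis at level $r-1$: $W(P_{\bsi}) \ge W(P_{\bsj})$ and, since $W(P_{\bsj}) \ge W(Q^{(r)}_{\bsj}) > \varepsilon$ (one needs that a child's weight never exceeds its parent's — this follows from $\delta^P \in (0,1)$ and \eqref{eq:weight_Q_i_P}), also $\delta^{P_{\bsi}} \ge \delta^{P_{\bsj}}$. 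Now $Q^{(r)}_{\bsi} = Q^{P_{\bsi}}_{i_r}$ and $Q^{(r)}_{\bsj} = Q^{P_{\bsj}}_{j_r}$, so \eqref{eq:weight_Q_i_P} gives $W(Q^{(r)}_{\bsi}) = \delta^{P_{\bsi}} W(P_{\bsi})$ and $W(Q^{(r)}_{\bsj}) = \delta^{P_{\bsj}} W(P_{\bsj})$; the product of two dominating nonnegative factors dominates, which is \eqref{sharp_1}. Crucially, $\delta^{P_{\bsi}} W(P_{\bsi})$ does \emph{not} depend on $i_r$, so the strictness of \eqref{sharp_1} is governed purely by whether $(\bsi',\delta^{P_{\bsi}}W(P_{\bsi}))$ strictly beats $(\bsj', \delta^{P_{\bsj}}W(P_{\bsj}))$, i.e.\ whether some $\nu \le r-1$ has $i_\nu < j_\nu$ — matching the claim. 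For \eqref{sharp_2}, apply the recursion \eqref{eq:representation_delta} to express $\delta^{Q^{(r)}_{\bsi}}$ as $\big((W(P_{\bsi}) - \varepsilon/\delta^{P_{\bsi}})/(W(P_{\bsi})-\varepsilon)\big)^{1/(d-i_r+1)}\delta^{P_{\bsi}}$ and likewise for $\bsj$; then invoke the three monotonicity assertions of Lemma~\ref{Lemma:Representation_delta} — $\delta^{Q^P_i}$ is increasing in $\delta^P$, increasing in $W(P)$, and decreasing in $i$ — to conclude $\delta^{Q^{(r)}_{\bsi}} \ge \delta^{Q^{(r)}_{\bsj}}$, with strictness as soon as any one of $\delta^{P_{\bsi}} > \delta^{P_{\bsj}}$, $W(P_{\bsi}) > W(P_{\bsj})$, or $i_r < j_r$ holds, which unwinds (using the strictness part of the hypothesis at level $r-1$) to "some $\nu \le r$ with $i_\nu < j_\nu$."

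\textbf{Expected main obstacle.} The routine part is the monotonicity bookkeeping; the delicate point is the precise tracking of the \emph{strictness} conditions — in particular the asymmetry that \eqref{sharp_1} is strict only for a strict inequality among the first $r-1$ coordinates (because the last decomposition step washes out the dependence on $i_r$ in the weight), whereas \eqref{sharp_2} is strict also when only $i_r < j_r$. Getting this exactly right requires carefully separating "the parents already differ strictly" from "the parents agree but the last index differs" in the inductive step, and verifying at each stage that the denominators and bases of the fractional powers in \eqref{eq:representation_delta} stay in the legal ranges (denominator $W(P)-\varepsilon > 0$, base of the power in $(0,1)$ so that the exponent $1/(d-i+1)$ produces genuine monotonicity in $i$). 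One should also double-check the technical lemma that the weight is non-increasing along any root-to-node path, since it is used to guarantee $W(P_{\bsj}) > \varepsilon$ whenever $W(Q^{(r)}_{\bsj}) > \varepsilon$, which is what licenses invoking the $\delta$-part of the induction hypothesis at the parent level.
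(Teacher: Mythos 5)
Your proposal is correct and takes essentially the same route as the paper: induction on $r$, equality of sibling weights from Corollary~\ref{Thi01a:Corollary3.2}, and the monotonicity of $\delta^{Q_i^P}$ in $\delta^P$, $W(P)$ and $i$ from Lemma~\ref{Lemma:Representation_delta}, with the same separation of cases (``parents coincide'' versus ``parents differ'') to track the strictness claims. One small repair: to invoke the $\delta$-part of the induction hypothesis for the parents you justify $W(Q^{(r-1)}_{\bsj'})>\varepsilon$ via $W(Q^{(r)}_{\bsj})>\varepsilon$, which is only assumed when proving \eqref{sharp_2}; for \eqref{sharp_1} you should instead note, as the paper does, that $Q^{(r)}_{\bsj}$ appears in the decomposition process only if its parent was actually decomposed, i.e., had weight strictly larger than $\varepsilon$.
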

 
 \begin{proof}
 We prove the inequalities via induction on $r$. 
 
 For $r=1$ 
 Corollary~\ref{Thi01a:Corollary3.2} 
 implies $W(Q^{(1)}_1) = \cdots = W(Q^{(1)}_d) = \delta^{Q^{(0)}_0} W(Q^{(0)}_0)$. 
 Furthermore, 
 if $\delta^{Q^{(0)}_0} W(Q^{(0)}_0) > \varepsilon$, 
 then Lemma~\ref{Lemma:Representation_delta} yields
 $\delta^{Q^{(1)}_1} > \delta^{Q^{(1)}_2} > \cdots > \delta^{Q^{(1)}_d}$.
 
 Now let $r\in \{2,\ldots, h\}$. We assume that the statement of Lemma~\ref{Lemma:Monotonie} holds for $r-1$. 
 Note that  $Q^{(r)}_{\bsi}$ and $Q^{(r)}_{\bsj}$ 
 appear in the decomposition process of $I^d$ if and only if 
 $Q^{(r-1)}_{(i_1, \ldots,i_{r-1})}$ and $Q^{(r-1)}_{(j_1, \ldots,j_{r-1})}$
 appear in the decomposition process and have weights strictly larger than 
 $\varepsilon$. 
 In case that 
 $Q^{(r-1)}_{(i_1, \ldots,i_{r-1})} = Q^{(r-1)}_{(j_1, \ldots,j_{r-1})}$ the statement follows again from Corollary~\ref{Thi01a:Corollary3.2} and  Lemma~\ref{Lemma:Representation_delta}.
 Otherwise our induction hypothesis yields
 \begin{equation*}
 w:= W(Q^{(r-1)}_{(i_1, \ldots,i_{r-1})}) \ge W(Q^{(r-1)}_{(j_1, \ldots,j_{r-1})}) =: w'
 \end{equation*}
and
\begin{equation*}
 \delta:= \delta^{Q^{(r-1)}_{(i_1, \ldots,i_{r-1})}} > \delta^{Q^{(r-1)}_{(j_1, \ldots,j_{r-1})}} =: \delta'.
 \end{equation*}
 Now 
 identity~\eqref{eq:weight_Q_i_P} 
 implies
 \[
 W(Q^{(r)}_{\bsi}) = \delta w > \delta' w'=  W(Q^{(r)}_{\bsj}),
 \]
 and Lemma~\ref{Lemma:Representation_delta} yields 
 \[
 \delta^{Q^{(r)}_{\bsi}} = \left( \frac{w-\varepsilon/\delta}{w-\varepsilon} \right)^{\frac{1}{s-i_r+1}} \delta  
 > \left( \frac{w'-\varepsilon/\delta'}{w'-\varepsilon} \right)^{\frac{1}{s-j_r+1}} \delta'  
 =  \delta^{Q^{(r)}_{\bsj}}. 
 \]
 \end{proof}

\begin{theorem}\label{main_thm}
Let $d\in \N$ and $\varepsilon \in (0,1]$. The height $h=h(d, \varepsilon)$ of the partition $\mathcal{P}^d_\varepsilon$ is given by
\begin{equation*}
 h = \min\{ r \in \N_0 \,|\, W(Q^{(r)}_{{\bf 1}_r}) \le \varepsilon \},
 \end{equation*}
and it satisfies the upper bound
\begin{equation}\label{est_h}
 h \le \lceil d (\varepsilon^{-1} - 1)\rceil.
\end{equation}
The cardinality of the $\varepsilon$-bracketing cover $\mathcal{P}^d_\varepsilon$
can be estimated as
\begin{equation}\label{est_card_cover_d_2}
|\mathcal{P}^2_\varepsilon| \le 2 \left( \frac{1}{\varepsilon} + \frac{1}{2} \right) 
\frac{1}{\varepsilon}
\end{equation}
if $d=2$, and as
\begin{equation}\label{est_card_cover}
|\mathcal{P}^d_\varepsilon| 
\le  \frac{d^d}{d!} \left( \frac{1}{\varepsilon} - \frac{1}{2} \left( 1 - \frac{3}{d} \right) \right)^{d}
\le \frac{d^d}{d!} \varepsilon^{-d}
\end{equation}
if $d\ge 3$. 
\end{theorem}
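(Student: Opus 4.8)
The plan is to pin down the height $h$ exactly in terms of the ``all-ones'' branch $Q^{(r)}_{{\bf 1}_r}$ of the decomposition, then control that branch by a one-dimensional recursion, and finally feed the resulting height bound into \eqref{Thi_est_Partition}. Throughout one may assume $\varepsilon\in(0,1)$: if $\varepsilon=1$ then $W(I^d)=1\le\varepsilon$, nothing is decomposed, $\mathcal{P}^d_1=\{I^d\}$, and every assertion holds trivially with $h=0$. To prove the displayed formula for $h$, write $w_r:=W(Q^{(r)}_{{\bf 1}_r})$, so $w_0=W(I^d)=1$, and set $h^*:=\min\{r\in\N_0\,|\,w_r\le\varepsilon\}$. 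Since $w_s>\varepsilon$ for every $s<h^*$, the interval $Q^{(s)}_{{\bf 1}_s}$ is decomposed for each such $s$, so $Q^{(r)}_{{\bf 1}_r}$ actually occurs in the decomposition process for every $r\le h^*$; as $w_{h^*-1}>\varepsilon$, Definition~\ref{height_partition} gives $h\ge h^*$. Conversely, suppose some $Q^{(r)}_{\bsj}$ with $r\ge h^*$ occurs and has weight $>\varepsilon$. Then its level-$h^*$ ancestor $Q^{(h^*)}_{(j_1,\ldots,j_{h^*})}$ (or $Q^{(r)}_{\bsj}$ itself if $r=h^*$) occurs and still has weight $>\varepsilon$, because every proper ancestor of a surviving interval was itself decomposed. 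Since ${\bf 1}_{h^*}$ is componentwise at most $(j_1,\ldots,j_{h^*})$ and $Q^{(h^*)}_{{\bf 1}_{h^*}}$ occurs as well, inequality \eqref{sharp_1} of Lemma~\ref{Lemma:Monotonie} yields $\varepsilon\ge w_{h^*}\ge W(Q^{(h^*)}_{(j_1,\ldots,j_{h^*})})>\varepsilon$, a contradiction. Hence no interval of level $\ge h^*$ has weight exceeding $\varepsilon$, so $h\le h^*$, and therefore $h=h^*$.

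Next I would bound $h^*$. Every interval $Q^{(r)}_{{\bf 1}_r}$ is of type $1$ (a type-$1$ interval $P$ has $Q^P_1$ of type $1$ among its children), so \eqref{eq:weight_P_only_betas_appear} gives $\prod_{i=1}^d\beta_i^{Q^{(r)}_{{\bf 1}_r}}=w_r$, and \eqref{eq:delta_thiemard} with $j=1$ gives $\delta^{Q^{(r)}_{{\bf 1}_r}}=\bigl(1-\varepsilon/w_r\bigr)^{1/d}$; combined with \eqref{eq:weight_Q_i_P} this produces the scalar recursion
\begin{equation*}
w_{r+1} = w_r\left(1-\frac{\varepsilon}{w_r}\right)^{1/d}, \qquad 0\le r<h .
\end{equation*}
For such $r$ one has $w_r>\varepsilon$, hence $\varepsilon/w_r\in(0,1)$, and Bernoulli's inequality $(1-t)^{1/d}\le 1-t/d$ gives $w_r-w_{r+1}=w_r\bigl(1-(1-\varepsilon/w_r)^{1/d}\bigr)\ge\varepsilon/d$. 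Telescoping from $w_0=1$ and using $w_{h-1}>\varepsilon$ yields $\varepsilon<w_{h-1}\le 1-(h-1)\varepsilon/d$, i.e.\ $h-1<d(\varepsilon^{-1}-1)$; since $h\in\N_0$, this is exactly \eqref{est_h}.

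Finally I would convert the height bound into the cardinality bound. By \eqref{Thi_est_Partition}, $|\mathcal{P}^d_\varepsilon|\le\binom{h+d}{d}=\tfrac{1}{d!}\prod_{i=1}^d(h+i)$, and \eqref{est_h} together with $\lceil x\rceil\le x+1$ gives $h\le d\varepsilon^{-1}-d+1$. For $d=2$ it suffices to insert $h\le 2\varepsilon^{-1}-1$ into $(h+1)(h+2)/2$ (increasing in $h\ge 0$); the result simplifies to $2\bigl(\varepsilon^{-1}+\tfrac12\bigr)\varepsilon^{-1}$, which is \eqref{est_card_cover_d_2}. For $d\ge 3$ I would apply the arithmetic--geometric mean inequality, $\prod_{i=1}^d(h+i)\le\bigl(\tfrac1d\sum_{i=1}^d(h+i)\bigr)^d$, and then use $\tfrac1d\sum_{i=1}^d(h+i)=h+\tfrac{d+1}{2}\le d\varepsilon^{-1}-\tfrac d2+\tfrac32=d\bigl(\varepsilon^{-1}-\tfrac12(1-\tfrac3d)\bigr)$, a nonnegative quantity; this gives the first inequality of \eqref{est_card_cover}, and since $1-3/d\ge 0$ and $\varepsilon^{-1}\ge 1$ the bracketed term is at most $\varepsilon^{-1}$, which gives the second.

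The step I expect to be the main obstacle is the first one: turning the intuitive claim ``the all-ones branch is the deepest'' into a proof requires keeping careful track of which intervals actually occur in the decomposition process and invoking the weight monotonicity \eqref{sharp_1} at exactly the right level. Once $h=\min\{r\,|\,W(Q^{(r)}_{{\bf 1}_r})\le\varepsilon\}$ is secured, everything collapses to the one-dimensional recursion, where the only real design choice is to follow the weights $w_r$ themselves — for which Bernoulli gives a decay linear in $r$ — instead of their logarithms; this is precisely what replaces Thi\'emard's factor $\varepsilon^{-1}\ln(\varepsilon^{-1})$ in \eqref{bound_thiemard} by $\varepsilon^{-1}$. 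The last step is then a routine combination of \eqref{Thi_est_Partition} with the AM--GM inequality.
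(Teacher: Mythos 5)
Your proof is correct and follows essentially the same route as the paper: the characterization $h=\min\{r\in\N_0 \,|\, W(Q^{(r)}_{{\bf 1}_r})\le\varepsilon\}$ via the monotonicity of Lemma~\ref{Lemma:Monotonie}, the scalar recursion $w_{r+1}=w_r(1-\varepsilon/w_r)^{1/d}$ with Bernoulli's inequality and telescoping to get \eqref{est_h}, and then \eqref{Thi_est_Partition} combined with the AM--GM inequality for \eqref{est_card_cover_d_2} and \eqref{est_card_cover}. The only difference is cosmetic: you spell out in full the ancestor/comparison argument identifying the all-ones branch as the deepest, a step the paper disposes of by simply invoking Definition~\ref{height_partition} and Lemma~\ref{Lemma:Monotonie}.
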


\begin{proof}
For $r\in \N$ we put $w_r := W(Q^{(r)}_{{\bf 1}_r})$
and $\delta_r:= \delta^{Q^{(r)}_{{\bf 1}_r}}$, and additionally we set $w_0 := W(Q^{(0)}_0) = 1$ and $\delta_0 := \delta^{Q^{(0)}_0} = (1-\varepsilon)^{1/d}$.  
Due to \eqref{eq:weight_P_only_betas_appear} and \eqref{eq:delta_thiemard} we obtain
\[
\delta_r = \left( \frac{w_r - \varepsilon}{w_r} \right)^{1/d}
\hspace{3ex}\text{for all $r\in \N_0$,}
\]
and due to \eqref{eq:weight_Q_i_P} 
we have $w_r = \delta_{r-1} w_{r-1}$
for all $r\in \N$. This yields
\begin{equation}\label{course_estimate_eps_over_s}
w_{r-1} - w_r  
= (1-\delta_{r-1}) w_{r-1} = \left( 1 - \left( 1 - \varepsilon/w_{r-1} \right)^{1/d}   \right) w_{r-1} 
\ge \varepsilon/d,
\end{equation}
where in the last step we used the inequality $(1-x)^{1/d} \le 1 - x/d$, which holds true for all $x\in [0,1]$.

Due to Definition~\ref{height_partition} and Lemma~\ref{Lemma:Monotonie} 
the height $h$ of the partition $\mathcal{P}^d_\varepsilon$ is given by
\[
h = \min \{ r\in \N_0 \,: \, w_r \le \varepsilon \},
\]
and note that $w_r \le \varepsilon$ if and only if
\[
1-w_r = \sum^{r-1}_{\ell = 0} (w_\ell - w_{\ell + 1}) \ge 1-\varepsilon.
\]
Due to \eqref{course_estimate_eps_over_s} we have
\[ 
\sum^{r-1}_{\ell = 0} (w_\ell - w_{\ell + 1}) \ge r \cdot \varepsilon/d, 
\]
showing that for $w_r \le \varepsilon$ it suffices to have
\[
r \ge d(\varepsilon^{-1} - 1).
\]
Hence 
\[
h \le \left\lceil \frac{d}{\varepsilon} \right\rceil - d <  \frac{d}{\varepsilon} - d +1.
\]
Due to \eqref{Thi_est_Partition} we obtain 
\begin{equation}\label{est_card_cover_a_bit_better}
|\mathcal{P}^d_\varepsilon| 
\le  {d + h \choose d}
\le \frac{d^d}{d!} \left( \frac{1}{\varepsilon} + \frac{1}{d} \right) 
\frac{1}{\varepsilon} \left( \frac{1}{\varepsilon} - \frac{1}{d} \right) 
\cdots \left( \frac{1}{\varepsilon} - \frac{d-2}{d} \right).
\end{equation}
This yields in the case $d = 2$ 
\begin{equation*}
|\mathcal{P}^2_\varepsilon| \le 2 \left( \frac{1}{\varepsilon} + \frac{1}{2} \right) 
\frac{1}{\varepsilon}.
\end{equation*}
In the case where $d \ge 3$, we employ the fact that a product consisting of $d$ positive factors is always at most as large as the $d$-th power of the arithmetic mean 
of the factors, which gives us
\begin{equation*}
|\mathcal{P}^d_\varepsilon| 
\le  \frac{d^d}{d!} \left( \frac{1}{\varepsilon} - \frac{1}{2} \left( 1 - \frac{3}{d} \right) \right)^{d}
\le  \frac{d^d}{d!} \varepsilon^{-d}.
\end{equation*}
\end{proof}

\begin{remark}[Some Further Improvements]\label{Rem:Improvements}
If we apply in \eqref{course_estimate_eps_over_s} 
a less coarse estimate, we may obtain a slightly better result than \eqref{est_h}.
Indeed, consider the function $f: [0,1) \to [1,\infty)$, given by
\begin{equation*}
f(x) := 1 + \sum^\infty_{k=1} \frac{(k-\tfrac{1}{d}) \cdot \ldots \cdot (1-\tfrac{1}{d})}{(k+1)!}\, x^k.
\end{equation*}
Using the Taylor series expansion of the function $(1-x)^{1/d}$, we get for $w_{\ell -1} > \varepsilon$ the identity
\begin{equation*}
w_{\ell -1} - w_\ell  
= \left( 1 - \left( 1 - \varepsilon/w_{\ell-1} \right)^{1/d}   \right) w_{\ell - 1} 
= \frac{\varepsilon}{d} \cdot f \left( \frac{\varepsilon}{w_{\ell -1}} \right).
\end{equation*}
Since $f$ is a monotonic increasing function and $w_{\ell -1} \le 1$, we have 
\[
w_{\ell -1} - w_\ell \ge \frac{\varepsilon}{d} \cdot f \left( \varepsilon \right),
\]
resulting in 
\[
1- w_r = \sum^{r-1}_{\ell = 0} (w_\ell - w_{\ell + 1})
\ge r \cdot  \frac{\varepsilon}{d} \cdot f \left( \varepsilon \right).
\]
Analogously as in the proof of Theorem~\ref{main_thm}, we obtain that the height $h$ satisfies
\begin{equation}\label{impro_h}
h = h(d, \varepsilon) \le \left\lceil \frac{d (\varepsilon^{-1} - 1)}{f(\varepsilon)} \right\rceil.
\end{equation}
For instance, employing the first-order truncation $1+ \tfrac{d-1}{2d} \varepsilon \le f(\varepsilon)$, \eqref{impro_h} implies the estimate
\begin{equation*}
h = h(d, \varepsilon) \le \left\lceil \frac{d (\varepsilon^{-1} - 1)}{1+ \tfrac{d-1}{2d} \varepsilon} \right\rceil, 
\end{equation*}
which for moderately small $\varepsilon$ clearly improves on \eqref{est_h}.

By using the same arguments as in the proof of Theorem~\ref{main_thm}, we now may improve \eqref{est_card_cover_d_2} and \eqref{est_card_cover}
by 
\begin{equation}\label{impro_1}
|\mathcal{P}^2_\varepsilon| \le 2 \left( \frac{\varepsilon^{-1}-1}{f(\varepsilon)} + \frac{3}{2} \right) 
\left( \frac{\varepsilon^{-1}-1}{f(\varepsilon)} + 1 \right) 
\end{equation}
if $d=2$, and by
\begin{equation}\label{impro_2}
|\mathcal{P}^d_\varepsilon| 
\le  \frac{d^d}{d!} \left(  \frac{\varepsilon^{-1}-1}{f(\varepsilon)} + \frac{1}{2} \left( 1 + \frac{3}{d} \right) \right)^{d}
\end{equation}
if $d\ge 3$. 
\end{remark}

\begin{remark}[The Case $d=2$]\label{Rem:Case_d_2}
The upper bounds for the bracketing number presented in Theorem~\ref{main_thm} 
and Remark~\ref{Rem:Improvements}
improve on the best known bounds so far in the case $d\ge 3$. 
In the case where $d=2$, actually better bounds are known. Indeed, in \cite[Proposition~5.1]{Gne08b} it was shown constructively, that the asymptotic behavior of the bracketing number in dimension $2$ is 
\begin{equation}\label{old_d_2_asymptotic}
N_{[\,]}(\varepsilon, \mathcal{C}_2, \|\cdot \|_{L^1}) \le \varepsilon^{-2} + o(\varepsilon^{-2});
\end{equation}
a matching lower bound was proved in \cite[Theorem~1.5]{Gne08a}. 

An upper bound with fully explicit constants was established constructively in 
\cite[Theorem~2.2]{GPW21} and reads 
\begin{equation}\label{old_d_2_pre_asymptotic}
N_{[\,]}(\varepsilon, \mathcal{C}_2, \|\cdot \|_{L^1}) 
\le 2 \ln(2) \varepsilon^{-2} + 3(\ln(2) + 1) \varepsilon^{-1} - \left( \tfrac{13}{9} \ln(2) - 1 \right);
\end{equation}
note that this bound has a smaller coefficient in front of the most important power $\varepsilon^{-2}$ than the bound \eqref{est_card_cover_d_2}.

The constructions that yield \eqref{old_d_2_asymptotic} and  \eqref{old_d_2_pre_asymptotic}, respectively, are different from Thi\'emard's construction.  But in 
\cite[Proposition~3.1]{Gne08b} the following asymptotic bound for Thi\'emard's bracketing cover was proved:
\begin{equation*}
|\mathcal{P}^2_\varepsilon| \le 2 \ln(2) \varepsilon^{-2} + O(\varepsilon^{-1}).
\end{equation*}
Moreover, numerical results in \cite{Gne08b} indicate that Thi\'emard's bracketing cover $\mathcal{P}^2_\varepsilon$ has a comparable or even slightly smaller cardinality than the one that yielded the bound \eqref{old_d_2_pre_asymptotic}, but a clearly larger one than the one that implied the bound \eqref{old_d_2_asymptotic}.
The problem with the latter construction is that it is not obvious how to generalize it to arbitrary dimensions in a way such that its cardinality can be upper bounded with reasonable effort.
\end{remark}

\paragraph*{Acknowledgment} 
This note was initiated at the Dagstuhl workshop 23351, ``Algorithms and Complexity for Continuous Problems". The author thanks the Leibniz Center for Informatics, Schloss Dagstuhl, and its staff for their support and their hospitality. 

He also would like to thank two anonymous referees for their helpful comments.

{\footnotesize

\bibliographystyle{plain}
\bibliography{References}

}

\end{document}